\documentclass{article}
\usepackage[english]{babel}
\usepackage[utf8]{inputenc}
\usepackage{amsmath}
\usepackage{graphicx}
\usepackage{amsfonts}
\usepackage{enumitem}
\usepackage{amssymb}
\usepackage[colorinlistoftodos]{todonotes}
\usepackage{geometry}
\usepackage{amsthm}
\usepackage{enumitem} 
\usepackage{hyperref}

\newcommand{\remove}[1]{}
\usepackage{tikz}

\long\def\onefigure#1#2{
\begin{figure*}[tbp]
\begin{center}
#1
\end{center}
\caption{#2}
\end{figure*}
} 

\newcommand{\lipefig}[2]  
{\onefigure{\mbox{\psfig{file=#1.eps}}}{\label{f:#1} #2} }

\newcommand{\cH}{\mathcal{H}}

\newcommand{\si}{\sigma}

\newcommand{\R}{\mathbb{R}}

\newcommand{\cL}{\mathcal{L}}

\newtheorem{thm}{Theorem}[section]
\newtheorem{lem}[thm]{Lemma}

\theoremstyle{definition}

\DeclareMathOperator{\CR}{cr}
\DeclareMathOperator{\conv}{conv}

\DeclareMathOperator{\disc}{disc}

\title{Line arrangements, bounded cells, and discrepancy}
\author{Imre B\'ar\'any}
\date{September 2026}
\begin{document}

\maketitle
\textbf{Abstract}: An arrangement of $n$ lines in the plane defines cells, i.e., connected components of the complement of the arrangement. Answering a question of Volkmar Welker we show that, apart from two special cases, there is always a line $L \in \cL$ such that one of the open halfplanes whose boundary is $L$ contains more than half of the bounded cells of the arrangement. We also consider some unusual discrepancy problems that are related to this question. 

\medskip
\textbf{Keywords}: line arrangements, bounded cells, crossings, discrepancy

\textbf{Mathematics Subject Classification}: 11K38, 05E99, 52C45

\section{Introduction}
Let $\cL=\{L_1,\ldots,L_n\}$ be an arrangement of $n$ distinct lines in $\R^2$. For $L\in \cL$, let $L^+$ and $L^-$ denote the open halfplane above and below (respectively) of $L$. Here we assume that $L$ contains no vertical line. The arrangement defines {\sl cells} in $\R^2$: a cell is a relatively open (and convex) subset $C$ of $\R^2$  such that for every $L\in \cL$ and for every pair $x,y\in C$, $x,y\in L^+$ or $x,y\in L^-$ or $x,y\in L$. The empty set is not a cell by definition. We write $B=B(\cL)$ for the set of all bounded cells of the arrangement. For a set $M\subset \R^2$, $b(M)$ denotes the number of bounded cells in $M$. 

\smallskip
Motivated by a problem in algebraic combinatorics, Volkmar Welker \cite{Wel} asked the following question. Is it true that there is always a line $L\in \cL$ with $\max \{b(L^+), b(L^-)\} > \frac {|B|}2$? 

\smallskip
Consider three simple cases, see Figure~\ref{fig:abc} below:
\begin{figure}[h!]
\centering
\includegraphics[scale=0.8]{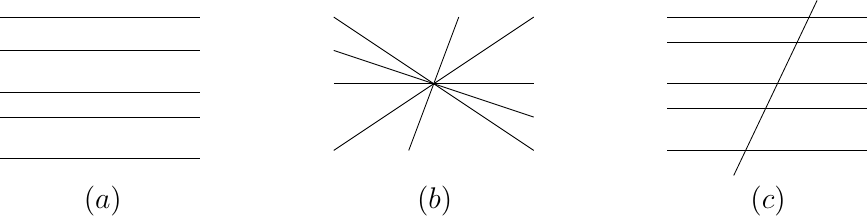}
\caption{Three special cases.}
\label{fig:abc}
\end{figure}
\begin{enumerate}
\item[(a)] when $B=\emptyset$. Then all lines in $\cL$ are parallel and $b(L^+)=b(L^-)=0$ and $|B|=0$,

\item[(b)] when $B$ is a single point,  then all lines in $\cL$ contain this points and $b(L^+)=b(L^-)=0$ and $|B|=1$,

\item[(c)] when $\cL$ consists of $n-1$ parallel lines, and another, non-parallel line, then $|B|=2n-3$ and $\max \{b(L^+), b(L^-)\}=2(n-2).$
\end{enumerate}
The answer to Welker's problem is clearly false in cases (a) and (b) and is true in case (c). From now on we assume that $n\ge 3$. 
Our main result gives a positive answer to Welker's question.

\begin{thm}\label{th:main} Apart from cases (a) and (b) there is always an $L\in \cL$ satisfying 
\begin{equation}\label{eq:Wel}
\max \{b(L^+), b(L^-)\} > \frac {|B|}2. 
\end{equation}
\end{thm}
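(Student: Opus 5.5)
The plan is to pick a line $L$ from an extremal configuration, so that one open side of $L$ is forced to contain very few bounded cells, and then compare counts on the two sides. Recall that cells here are relatively open and include bounded $0$-, $1$- and $2$-dimensional pieces (vertices, bounded edges, bounded faces). So for every $L\in\cL$ we have
\[
|B| = b(L^+) + b(L^-) + b(L),
\]
where $b(L)$ is the number of bounded cells lying on $L$: the vertices on $L$ and the bounded segments between them. Inequality (\ref{eq:Wel}) for this $L$ is therefore equivalent to
\[
b(L^+) > b(L^-) + b(L),
\]
after renaming the sides if necessary. It suffices to find one line $L$ whose "small side" together with $L$ itself carries strictly fewer bounded cells than the "large side". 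Outside cases (a) and (b), the vertices of the arrangement span a set $P$ with $|P|\ge 2$.

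First step: take a vertex $v$ of $\conv P$, rotate so that $v$ is the unique lowest point of $P$, and let $L$ be a line through $v$. Every bounded cell lies in $\conv P$. Hence every bounded cell lies in the closed upper halfplane of the horizontal line through $v$, and in fact inside the cone at $v$ spanned by $\conv P$. The natural candidate for $L$ is an extremal line through $v$: the line through $v$ for which the part of $\conv P$ on one side is as thin as possible. Alternatively, if no line through $v$ meets $\conv P$ only at $v$, take a line of $\cL$ supporting an edge of $\conv P$.

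If some line $L$ supports $\conv P$, then $b(L^-)=0$, and it remains to show $b(L^+) > b(L)$. This is a direct count. If $L$ carries $k$ vertices, then $b(L)=2k-1$. Each of the $k-1$ bounded segments on $L$ is a side of a distinct bounded face in $L^+$. Each vertex on $L$ has further lines through it, and these produce edges and vertices in $L^+$. Outside case (b), this gives at least $2k$ bounded cells in $L^+$; I expect the extreme case to be like (c), with equality analysis showing strictness.

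The main obstacle is that no line of $\cL$ need support $\conv P$. For example, with lines tangent to a circle, every line has vertices on both sides. In that situation I would use an averaging argument instead of an extremal one. Sum
\[
b(L^+) - b(L^-) - b(L)
\]
over suitable lines, for instance the two lines of $\cL$ through the hull vertex $v$ that bound the cone containing $P$, or over all lines, each oriented so that its larger side is counted positively. Then show the total is positive by double counting each bounded cell $C$: count how many lines have $C$ on their large side versus their small side or on them. I expect this double counting, specifically proving that a bounded cell lies on the "majority side" of more lines than not, to be the hardest step. I would first try it via the crossing structure: for each pair of lines, count the vertices on each side.
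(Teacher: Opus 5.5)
Your reduction is correct: since $|B|=b(L^+)+b(L^-)+b(L)$ and $b(L)=2k-1$ when $L$ carries $k$ crossings, the desired inequality for $L$ is equivalent to $|b(L^+)-b(L^-)|\ge 2k$. Beyond that, however, the proposal has no working argument.

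The supporting-line case is only asserted, and as stated it is false. For the transversal $L$ in case (c), every crossing lies on $L$, the faces next to its bounded segments are unbounded half-strips, and $b(L^+)=b(L^-)=0$. (This is repairable: if $L\in\cL$ supports $\conv P$ and $P\not\subset L$, one can show $b(L^+)\ge 2k$. But it needs an argument.)

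The real gap is the general case, which you correctly identify as the main one. Your double count rests on the claim that each bounded cell $C$ has more lines with $C$ on their larger side than lines containing $C$ or having $C$ on their smaller side. This already fails for three lines in general position: each vertex lies on two of the lines and is on the larger side of only the third. So the main case is left open.

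The missing idea is an exact identity for the pair of lines you mention only in passing. Put a vertex of $\conv X$ at the origin and let $L_1,L_2$ be the extreme lines of $\cL$ through it, with $k_i=|X\cap L_i|$. Let $M$ be the open cone between them whose boundary rays carry the crossings of $L_1,L_2$, oriented so that $M=L_1^+\cap L_2^-$. The cone $M$ need not contain all of $X$; what matters is that the opposite cone contains no crossing, hence no bounded cell. Then
\[
b(L_1^+)=b(L_2^+)+(2k_2-2)+b(M),\qquad b(L_2^-)=b(L_1^-)+(2k_1-2)+b(M),
\]
so $[b(L_1^+)-b(L_1^-)]-[b(L_2^+)-b(L_2^-)]=2(k_1+k_2-2+b(M))$. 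If both lines failed, the left side would be at most $(2k_1-1)+(2k_2-1)$.

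So, apart from case (c), everything reduces to a lower bound on $b(M)$. A crossing in $M$ gives $b(M)\ge 3$: the crossing itself plus a bounded edge on each of its two lines. A short case analysis, using the crossings $x_1,x_2$ nearest the origin on $L_1,L_2$ and the line through them, either finds a crossing in $M$ or leaves a few small configurations. The paper checks some of these directly, because its sufficient condition $|b(L_i^+)-b(L_i^-)|>2k_i$ needs $b(M)\ge 3$. With your exact threshold, $b(M)\ge 2$ suffices, and in the leftover configurations the open segment $(x_1,x_2)$ and the open triangle with vertices $0,x_1,x_2$ supply it.
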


This inequality is related to discrepancy. The {\sl bounded cell discrepancy} of a line $L\in \cL$, and the {\sl bounded cell discrepancy} of the arrangement $\cL$ are defined as 
\[
\disc_{BC}(L)=|b(L^+)-b(L^-)| \mbox{ and }\disc_{BC}(\cL)=\max_{L\in \cL}\disc_{BC}(L), \mbox{ respectively}.
\]
We define, as usual, $\disc_{BC}(n)=\min \disc_{BC}(\cL)$ where the minimum is taken over all $n$-line arrangements $\cL$ avoiding cases (a) and (b).

\begin{thm}\label{th:BCdisc} For every $n\ge 3$ we have
\[
\max \left(1,\frac 13(n-14)\right) \le \disc_{BC}(n) \le 2(n-2).
\]
\end{thm}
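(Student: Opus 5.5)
The lower bound $\disc_{BC}(n)\ge 1$ follows directly from Theorem~\ref{th:main}. Since $L^+$, $L^-$ and $L$ are disjoint, $b(L^+)+b(L^-)\le |B|$. The line $L$ given by \eqref{eq:Wel} has, say, $b(L^+)>|B|/2\ge (b(L^+)+b(L^-))/2$, so $b(L^+)>b(L^-)$ and hence $\disc_{BC}(L)\ge 1$.

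For the upper bound I would use the arrangement of case (c). Recall that cells include vertices and edges, so there the bounded cells are the $n-1$ crossing points on the transversal $T$ and the $n-2$ segments of $T$ between them. The transversal itself has $b(T^+)=b(T^-)=0$. A parallel line that is not extreme has bounded cells on both sides, and its discrepancy is smaller than that of an extreme one. For an extreme parallel line, one side contains $n-2$ points and $n-2$ segments, one of which is the half-open segment adjacent to it, and the other side contains nothing. This gives $\disc_{BC}(\cL)=2(n-2)$ and therefore $\disc_{BC}(n)\le 2(n-2)$.

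For the lower bound $\frac13(n-14)$ I would look for a line $L\in\cL$ with all bounded cells in $L^+\cup L$. Let $P$ be the convex hull of the vertices of $\cL$. I would first check that every edge of $P$ lies on a line of $\cL$. The reason is that near a hull edge the arrangement has no further vertices outside, so the lines through the two endpoints must realize the supporting line. If $P$ is a segment or a point, we are in case (c) or in the excluded cases, and these are handled directly. For a supporting line $L$ of this kind, $b(L^-)=0$, so
\[
\disc_{BC}(L)=b(L^+)=|B|-b(L),\qquad b(L)=2k-1,
\]
where $k$ is the number of vertices on $L$. It then suffices to find a hull-edge line $L$ for which $|B|-b(L)\ge \frac13(n-14)$.

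To do this I would use several hull edges simultaneously. Take three edges of $P$ when $P$ is a polygon with at least three edges, lying on lines $L_1,L_2,L_3$. Any two of these lines share at most one vertex. Hence the bounded cells on $L_j$ for $j\ne i$, apart from $O(1)$ shared vertices and adjacent edges, lie strictly in $L_i^+$. Averaging over $i$ gives
\[
|B|-b(L_i)\ \ge\ \frac13\sum_j b(L_j)-O(1)
\]
for some $i$. It then remains to show that $\sum_j b(L_j)\gtrsim n$, that is, that the hull-edge lines together carry about $n/2$ vertices. Otherwise many lines avoid all these edges, and I would count the vertices and faces those lines create inside $P$, giving about $n$ bounded cells off each $L_i$ by Euler-type counting. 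This step is the main obstacle: the constants, and the degenerate situations with many parallel lines or many concurrent lines (where $P$ may be a thin triangle with nearly all vertices on one hull line), have to be handled by a separate counting argument. I expect this bookkeeping to be the source of the loss in $\frac13(n-14)$.
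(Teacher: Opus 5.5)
\paragraph{There is a genuine gap in the lower bound.} Your derivation of $\disc_{BC}(n)\ge 1$ from Theorem~\ref{th:main} is correct, and so is the upper bound from case (c). The lower bound $\frac13(n-14)$ is where the proposal fails, and the set-up cannot be repaired as stated.

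Its foundation is the claim that every edge of $P=\conv X$ lies on a line of $\cL$, and this is false. Take the four lines $y=x/10$, $y=x/5$, $y=(10-x)/10$, $y=(10-x)/5$. Their crossings are $(0,0)$, $(10,0)$, $(5,\tfrac12)$, $(\tfrac{10}3,\tfrac23)$, $(\tfrac{20}3,\tfrac23)$, $(5,1)$. So $P$ is the triangle with vertices $(0,0),(10,0),(5,1)$, and its bottom edge lies on no line of $\cL$.

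Worse, a line $L\in\cL$ with $b(L^-)=0$ or $b(L^+)=0$ need not exist at all. Take the five lines $T_0,\dots,T_4$ tangent to a circle at the vertices of a regular pentagon. Then $P$ is the pentagon of the outer crossings $S_i=T_{i-1}\cap T_{i+1}$ (indices mod $5$). Each $T_i$ has $S_i$ strictly on one side and $S_{i\pm 2}$ strictly on the other. So the identity $\disc_{BC}(L)=|B|-b(L)$, on which your whole plan rests, is unavailable in general.

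Independently of this, the step you call the main obstacle is not proved. That step is showing that the chosen lines leave order $n$ bounded cells off themselves. It is the entire quantitative content of the bound, and ``Euler-type counting'' does not supply it.

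\paragraph{What the paper does instead.} It works at a \emph{vertex} $x$ of $P$ rather than along an edge.
\begin{itemize}
\item After an affine map, put $x$ at the origin with $P\setminus\{x\}$ in the right half-plane.
\item Let $L_1,L_2$ be the lines of $\cL$ through $x$ of smallest and largest slope, and $M$ the open cone between them containing the positive $x$-axis.
\item All crossings on $L_1\cup L_2$ then lie on the $M$-side of $x$, and the cone opposite to $M$ contains no bounded cell.
\item The two side cones may contain bounded cells, but these cancel in the difference. This gives
\[
[b(L_1^+)-b(L_1^-)]-[b(L_2^+)-b(L_2^-)]=2(k_1+k_2+b(M)-2),
\]
which is (\ref{eq:dir}), with no supporting line needed.
\end{itemize}

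What remains is to make $k=k_1+k_2$ or $b(M)$ linear in $n$. The paper does this with the crossing lemma. Take as vertices the crossings on $L_1\cup L_2$ together with two or three auxiliary points at infinity. Draw one edge along each remaining line of $\cL$, plus the pieces of $L_1$ and $L_2$. This gives a drawn graph with about $k+n$ edges, and its edge crossings are crossings of $\cL$ inside $M$. Hence either $k\ge\frac13(n-8)$, or $M$ contains more than $\frac14(k+n)$ crossings. Either alternative, fed into (\ref{eq:dir}), gives $\max_{i}\disc_{BC}(L_i)\ge\frac13(n-14)$.
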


Thus the order of magnitude of $\disc_{BC}(n)$ is linear in $n$. Example (c) gives the upper bound. We prove the lower bound in the last section.

\section{Crossing discrepancy}

A {\sl crossing} (or a vertex) in an arrangement $\cL$ is the intersection point of two lines in $\cL$. Let $X=X(\cL)$ be the set of crossings. For a set $M\subset \R^2$ define $v(M)=|X \cap M|$. The {\sl crossing discrepancy} of $L \in \cL$ is $\disc_X(L)=|v(L^+)-v(L^-)$ and, as one should expect, the {\sl crossing discrepancy} of $\cL$ is
\[
\disc_X(\cL)=\max_{L\in \cL}\disc_X(L) \mbox{ and } \disc_X(n)=\min \disc_X(\cL)
\]
where the minimum is taken over all $n$-line arrangements except cases (a) and (b). In (c) the crossing discrepancy is exactly $n-2$. In geometry the crossing discrepancy seems to be more natural than the bounded cell discrepancy.  Our next result shows that $\disc_X(n)$ grows linearly with $n$.

\begin{thm}\label{th:Xdisc} 
\[
\max \left\{1, \frac 16(n-14)\right\}\le \disc_X(n) \le n-2.
\]
\end{thm}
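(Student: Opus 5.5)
The upper bound $\disc_X(n)\le n-2$ comes from example (c). There the $n-1$ parallel lines produce $n-1$ crossings, all on the transversal $L_n$. For a parallel line $L_i$, the crossings on the other parallel lines split as $i-1$ on one side and $n-1-i$ on the other. Taking the topmost or bottommost parallel line gives discrepancy $n-2$. No other line does better: the transversal has discrepancy $0$, and the inner parallel lines have $|n-2i|\le n-2$. Hence $\disc_X(\cL)=n-2$ for this arrangement. All the work is therefore in the lower bound.

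For the lower bound I would relate crossing discrepancy to bounded cell discrepancy by an Euler-type count inside an open halfplane. Fix $L\in\cL$ and consider $L^+$. The other lines cut $L^+$ into cells, and I apply Euler's formula to the planar graph formed by $\overline{L^+}$ with its crossings, the crossings on $L$, and the edges. This should give an identity of the form
\[
b(L^+)=\sum_{x\in X\cap L^+}(d(x)-1)+\bigl(\text{term depending only on } L\cap X \text{ and on the lines meeting } L^+\bigr),
\]
where $d(x)$ is the number of lines through $x$. Each line $M\ne L$ not parallel to $L$ meets both $L^+$ and $L^-$, and lines parallel to $L$ contribute symmetrically. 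So the boundary terms should largely cancel in $b(L^+)-b(L^-)$, leaving roughly a weighted crossing difference plus an $O(1)$ correction per parallel class. To avoid the degree weights, I would first prove the lower bound for arrangements in which every crossing lies on exactly two lines, where the weights are all equal to $1$, and then reduce the general case to this by a small perturbation that keeps the side of each crossing relative to each line.

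The direct geometric argument I would then use goes as follows. Take a vertex $p$ of $\conv X$, let $L_1,\dots,L_k$ ($k\ge2$) be the lines through $p$, and let $H$ be a closed halfplane with $p$ on its boundary that contains $X$. Let $L_a$ and $L_b$ be the two lines through $p$ whose directions inside $H$ are extreme. Every line $M\ne L_a,L_b$ meets both $L_a$ and $L_b$ in $H$ (or is parallel to one of them), so all crossings lie in the wedge between them or in the two wedges adjacent to it. For $L_a$, the crossings on the far side of $L_a$ from the main wedge come only from pairs of lines whose crossing lies beyond $L_a$. I would charge these to lines, each line contributing at most a bounded number of crossings on the wrong side. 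This should show that $v$ on one side of $L_a$ or of $L_b$ exceeds the other side by at least about $n/6-O(1)$, giving the bound $\frac16(n-14)$. The constant $14$ comes from handling parallel classes and the lines through $p$ separately.

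Separately, for small $n$ I need $\disc_X(\cL)\ge1$. Outside cases (a) and (b), take the line through an extreme crossing $p$ with the most crossings strictly on one side. The other side then contains strictly fewer crossings because $p$ is a hull vertex.

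The main obstacle is the charging step: controlling the crossings that fall on the wrong side of $L_a$ and $L_b$ well enough to keep a linear gap. If the charging argument fails, my fallback is to choose $p$ as the leftmost crossing and work with the lines of minimal and maximal slope instead.
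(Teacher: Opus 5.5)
Your upper bound via example (c) is correct and is exactly the paper's. The lower bound has a genuine gap at the charging step you flag, and that step is false, not just unproven. Take $p=0$, $L_a\colon y=-x$, $L_b\colon y=x$. Add $n-2$ further lines in general position, all of slope greater than $1$ and all passing very close to a point $q=(x_0,y_0)$ with $x_0>0$, $y_0<-x_0$. Each of them meets both open rays $\{(t,\pm t):t>0\}$, so every crossing other than $p$ has positive abscissa. Thus $p$ is still a vertex of $\conv X$ and $L_a,L_b$ are the extreme lines through it. Yet all $\binom{n-2}{2}$ mutual crossings of the new lines lie beyond $L_a$, each line contributing $n-3$ of them. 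In general both outer cones (beyond $L_a$ and beyond $L_b$) can carry quadratically many crossings, so neither line's discrepancy can be controlled on its own.

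The idea you are missing is to use the two extreme lines together. The crossings in the outer cone beyond $L_b$ lie on the same side of both lines, and likewise for the outer cone beyond $L_a$. The cone opposite the wedge $M$ contains no crossings. So these terms cancel in the difference of the two signed discrepancies. With $k_i=|X\cap L_i|$, $k=k_1+k_2$, $m=v(M)$ one gets $v(L_1^+)=v(L_2^+)+(k_2-1)+m$ and $v(L_2^-)=v(L_1^-)+(k_1-1)+m$. Hence $\disc_X(L_1)+\disc_X(L_2)\ge k+2(m-1)$. Since $k\ge 3$ outside cases (a) and (b), this already gives $\disc_X(n)\ge 1$. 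Your own justification of that bound (``the other side has strictly fewer crossings because $p$ is a hull vertex'') is not an argument: the supporting line at $p$ is in general not a line of $\cL$.

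Second, you need $k+2m$ to be linear in $n$, and nothing in your plan produces this. The difficulty is that $k$ can be small when many lines share their intersection points with $L_1\cup L_2$. But then lines through different such points are forced to cross inside $M$. The paper quantifies this dichotomy with a drawn graph $G$:
\begin{itemize}
\item its vertices are the $k$ crossings on $L_1\cup L_2$ plus $t\le 2$ points at infinity;
\item its edges (about $k+n$ of them) are the pieces of the lines of $\cL$ in the closed wedge, together with the segments of $L_1,L_2$ between consecutive crossings;
\item every edge crossing is a point of $X\cap M$, so $m\ge \CR(G)$.
\end{itemize}
The crossing lemma then gives either $e\le 4v$, which forces $k\ge (n-8)/3$, or $m>(k+n)/4$. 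In both cases $k+2(m-1)\ge (n-14)/3$, so one of $L_1,L_2$ has discrepancy at least $(n-14)/6$.

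Finally, your Euler-formula paragraph never feeds into the argument. The proposed reduction to simple arrangements also does not work for crossing counts, because $\disc_X$ is not preserved. A point of $X$ on $d\ge 3$ lines splits into $\binom{d}{2}$ crossings that fall on different sides of the lines through it. Perturbing parallel lines creates new crossings.
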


In general one expects $\disc_X(\cL)$ to be of order $|X|$, otherwise every line in $\cL$ is an ``almost halving" line for $X$. Of course $\disc_X(\cL) < |X|$. We conjecture that $\disc_X(\cL) > c|X|$ for some universal constant $c>0$. We prove this in a special case, namely for general position arrangements, meaning that there are no parallel lines in $\cL$ and every $x\in X$ is the intersection point of a unique pair of lines in $\cL$. 

\begin{thm}\label{th:gen} For a general position $n$-line arrangement $\cL$ 
\[
\disc_X(\cL) \ge \frac {n^2}{\sqrt {60}}\sqrt{1+O(n^{-1})}.
\]
\end{thm}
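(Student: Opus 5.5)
We prove Theorem~\ref{th:gen} by averaging. It suffices to bound $\sum_{L\in\cL}\disc_X(L)^2$ from below, since $\disc_X(\cL)^2\ge \frac1n\sum_L \disc_X(L)^2$. For a line $L$ and a crossing $x\notin L$, put $\epsilon_L(x)=+1$ if $x\in L^+$ and $\epsilon_L(x)=-1$ if $x\in L^-$. Then $v(L^+)-v(L^-)=\sum_{x\in X\setminus L}\epsilon_L(x)$, and expanding the square gives
\[
\sum_{L}\big(v(L^+)-v(L^-)\big)^2=\sum_{x,y\in X}\ \sum_{L\not\ni x,y}\epsilon_L(x)\epsilon_L(y).
\]
The inner sum equals $s(x,y)-d(x,y)$, where $s$ counts the lines having $x$ and $y$ on the same side and $d$ those separating them. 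In general position $|X|=\binom n2$, each crossing lies on exactly two lines, and $s+d$ equals $n-2$, $n-3$ or $n-4$, depending on how many lines pass through $x$ or $y$.

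The key step is a lower bound for $\sum_{x,y}(s-d)$, which we reduce to counting separations. For a fixed pair $x\ne y$, the segment $[x,y]$ crosses exactly the $d(x,y)$ lines that separate $x$ from $y$. So $\sum_{x,y}d(x,y)=\sum_L \sigma(L)$, where $\sigma(L)=2v(L^+)v(L^-)$ counts ordered separated pairs. By AM--GM, $v(L^+)v(L^-)=\frac14\big((v^++v^-)^2-(v^+-v^-)^2\big)$ with $v^++v^-=|X|-(n-1)$. Substituting this shows that the identity above essentially reads $\sum_L D_L^2 \approx \sum_L D_L^2$, so it is tautological on its own. We therefore need genuine geometric input.

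That input is the main obstacle: an upper bound on the number of separating incidences $\sum_{x,y}d(x,y)$ that beats the trivial value of $\frac12$ of all (pair, line) incidences by a constant factor. We would argue via lines: for each line $L$, look at the order of the crossings of the other $n-1$ lines along $L$. Consider crossings $x=L\cap L_i$ and $y=L_j\cap L_k$. A third line $L_j$ cannot separate $x$ from points on $L_j$ itself, and there are strong constraints from the order of crossings along a single line. In particular, pairs $x,y$ lying on a common line $M$ are separated by a line $L'$ only if $L'$ crosses $M$ between them; the number of such separations is $\sum_M \sum_{i<j}(j-i-1)\approx n\cdot n^3/6$. Comparing this with the $\frac12 n\cdot n^2\cdot n$ total for collinear pairs (about $n^3$ pairs, $n$ lines each) shows those pairs are biased toward ``same side.'' A similar but harder computation is needed for non-collinear pairs. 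We would either find the analogous bias there or show that the collinear bias is not cancelled. One way to do the latter is a variance decomposition: write $\epsilon_L$ restricted to each line $M$ and use that $\sum_L(\sum_{x\in M}\epsilon_L(x))^2$ is computable exactly from the orders along $M$.

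Concretely, we would try to prove $\sum_L D_L^2\ge \frac{n^5}{60}(1+O(1/n))$, since this yields $\max_L D_L\ge n^2/\sqrt{60}$. The constant $1/60$ suggests an integral such as $\int\!\!\int (\text{something quadratic})=1/60$ arising from the positions $i/n$ of crossings along lines. We expect to obtain it by decomposing $D_L$ through a per-line sum over the crossings on each other line $M$, and then using Cauchy--Schwarz or orthogonality to discard the cross terms between different $M$ with a nonnegative sign. Verifying that the discarded terms are indeed nonnegative (or $O(n^4)$) is where the proof will live or die.
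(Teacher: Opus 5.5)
Your reduction to the target $\sum_{L}\disc_X(L)^2\ge \frac{n^5}{60}(1+O(1/n))$ is right, and so is the expansion over pairs of crossings. But the proposal stops where the proof has to happen, and both concrete strategies you suggest work at the wrong order of magnitude.

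\textbf{Where your two strategies fail.} Pairs of crossings lying on a common line number only about $n^3/2$. Their total contribution to $\sum_L\epsilon_L(x)\epsilon_L(y)$ is therefore $O(n^4)$; your own estimate $n\cdot n^3/6$ already shows this. The same happens with your per-line decomposition. Write $D_L=\frac12\sum_{M\ne L}S_{L,M}$ with $S_{L,M}=\sum_{x\in M\setminus L}\epsilon_L(x)$. Since $|S_{L,M}|\le n-2$, the diagonal part $\frac14\sum_L\sum_M S_{L,M}^2$ is at most $n^4/4$. Both quantities sit in the error term.

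The whole $n^5$ main term comes from pairs $x=K_1\cap K_2$, $y=L_1\cap L_2$ with $H,K_1,K_2,L_1,L_2$ pairwise distinct. In your decomposition these are exactly the cross terms $M\ne M'$ that you plan to discard with a nonnegative sign. Discarding them leaves at most $O(n^4)$, which gives a bound of order $n^{3/2}$ for $\disc_X(\cL)$, not $n^2/\sqrt{60}$. The constant $1/60$ is also not an integral over crossing positions; it is $2\binom n5/n\sim n^4/60$.

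\textbf{The missing idea: treat the five-line terms locally.}
\begin{enumerate}
\item Group the terms with five distinct lines by their underlying $5$-element subset $\{H_1,\dots,H_5\}$.
\item For each choice of the line $H_i$, the remaining four lines split into two disjoint pairs in three ways. Each split gives a product $\si(H_i,H_x,H_y)\si(H_i,H_u,H_v)$ of two signs.
\item Let $\si(H_1,\dots,H_5)$ be the sum of these $15$ products. Each appears twice in the expansion, so the $5$-subset contributes $2\si(H_1,\dots,H_5)$.
\item This value depends only on the combinatorial type of the $5$-line subarrangement. There are exactly six types of simple $5$-line arrangements (Ringel).
\item A direct check of the six types gives $\si\ge 1$ in every case; the values are $5,3,5,3,5,1$.
\end{enumerate}
Hence $\sum_H\disc_X(H)^2\ge 2\binom n5+O(n^4)$, and averaging over the $n$ lines gives the theorem. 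Without this finite case analysis, or some other genuine lower bound on the five-line terms, your argument does not prove the statement.
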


The following example shows an arrangement with $\disc_X(\cL)<n^2/4$. Assume $n$ is odd, consider a regular $n$-gon inscribed in the unit disk. Let $\cL$ consist of the lines that are tangent to the unit disk at the vertices of this $n$-gon. Note that this is a very uniform arrangement, because by symmetry the value of $\disc_X(L)$ is the same for every $L \in \cL$. To determine this value assume that $(0,1)\in \R^2$ is a vertex of the $n$-gon, and $L\in \cL$ is tangent to the unit disk at $(0,1)$. Then $\disc_X(L)$ is equal to the number of crossings strictly between the lines $y=1$ and $y=-1$. The two lines giving such a crossing have be tangent to the unit disk at two vertices of the $n$-gon on the same side of the $y$ axis. The number of such pair of lines is
\[
2{\frac{n-1}2 \choose 2}=\frac{n^2-4n+3}4.
\]
For this nice and simple argument I'm indebted to Pavel Valtr \cite{Val}.

\smallskip
Welker's question is in fact more general. Namely, assume that $\cH=\{H_1,\ldots,H_n\}$ is an arrangement of $n$ distinct hyperplanes in $\R^d$.  For $H\in \cH$, let $H^+$ and $H^-$ denote the open halfspace above and below (respectively) of $H$. The arrangement defines {\sl cells} in $\R^d$: a cell is a relatively open (and convex) subset $C$ of $\R^d$ defined the same way as in the planar case. The original question was whether there is always an $H \in \cH$ such that either $H^+$ or $H^-$ contains at least half of the bounded cells. This question remains open for all $n\ge 3$. The bounded cell and crossing  discrepancies can be extended to higher dimension of course.

\section{Proof of Theorem~\ref{th:main}}
Assume $x\in X$ is a vertex of the convex hull of $X$. With a suitable affine transformation we can achieve that $x$ coincides with the origin, that is $x=0$, and we can choose the two lines $L_1,L_2\in \cL$ with $L_1\cap L_2=0$ so that the slope of $L_1$ is negative, the slope of $L_2$ is positive and the slope of every other line $L \in \cL$ with $0\in L$ is between the slopes of $L_1$ and $L_2$.

\smallskip
Assume $|X\cap L_i|=k_i$ for $i=1,2$, of course $k_1,k_2\ge 1$, see Figure~\ref{fig:Mcone}. The cases $k_1=k_2=1$ or $=0$ are excluded. When $k_1=1$ and $k_2\ge 2$, then all lines $L\in \cL$ distinct from $L_1,L_2$ are either parallel with $L_1$ or contain the origin. Assume $s$ of them contain the origin. When $s=0$ we are in case (c) which is easy: $|B|=2k_2-1$, $b(L_1^-)=0,b(L_1^+)=2k_2-2$ and 
\[
b(L_1^+)=2k_2-2>\frac {|B|}2=k_2-\frac 12
\]
follows since $k_2\ge 2$.

\smallskip
 \begin{figure}[h!]
\centering
\includegraphics[scale=0.65]{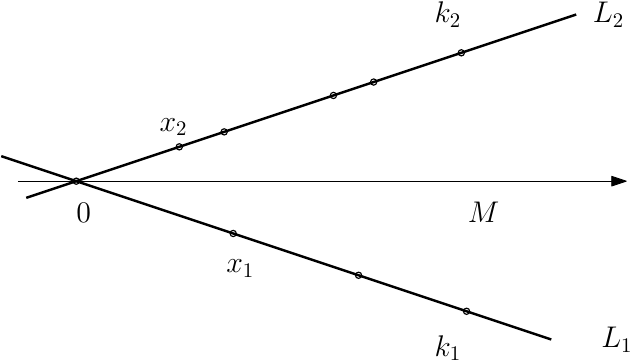}
\caption{Lines $L_1,L_2$, the cone $M$.}
\label{fig:Mcone}
\end{figure}

We can assume now that either $k_1=1$ and $s\ge1$ or both $k_1,k_2\ge 2$. As $b(L_i)=2k_i-1$, we have $b(L_i^+)+b(L_i^-)+(2k_i-1)=|B|$ for $i=1,2$, implying that the midpoint of the segment $[b(L_i^+),b(L_i^-)]$ is at distance $k_i-1/2$ from $|B|/2$. So inequality (\ref{eq:Wel}) would follow if $|b(L_i^+)-b(L_i^-)|>2k_i$ for either $i=1$ or $i=2$. Assume that on the contrary, $\disc_{BC}(L_i)=|b(L_i^+)-b(L_i^-)|\le 2k_i$ for both $i=1,2$. 

\smallskip
Let $M$ denote the open cone between the lines $L_1$ and $L_2$ containing the positive half of the $x$-axis, see Figure~\ref{fig:Mcone}. Then 
\[
b(L_2^+)+(2k_2-2)+b(M)=b(L_1^+) \mbox{ and similarly }b(L_1^-)+(2k_1-2)+b(M)=b(L_2^-).
\]
Consequently 
\begin{eqnarray*}
b(L_1^+)-b(L_2^+)+b(L_2^-)-b(L_1^-)&=&[b(L_1^+)-b(L_1^-)]-[b(L_2^+)-b(L_2^-)]\\
&=&2(k_1+k_2+b(M)-2)
\end{eqnarray*}
implying that 
\begin{equation}\label{eq:dir}
\disc_{BC}L_1+\disc_{BC}L_2 \ge 2(k_1+k_2+b(M)-2).
\end{equation}

\begin{lem} If $X \cap M \ne \emptyset$, then $b(M)\ge 3$.
\end{lem}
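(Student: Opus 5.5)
We will show that $X\cap M\neq\emptyset$ forces at least three bounded cells inside the open cone $M$. First we note what the choice of $x=0$ gives us. Since $0$ is a vertex of $\conv X$ and $L_1,L_2$ are the extreme lines through $0$, every crossing lies in the closed cone bounded by $L_1,L_2$ that contains $M$; equivalently $X\subset \overline M$. Every line $L\in\cL$ other than $L_1,L_2$ either passes through $0$, in which case it enters $M$ as a ray from the apex, or meets $\overline M$ in a segment, a ray, or not at all. Any line that meets the interior of $M$ splits $M$.

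The plan is to take a crossing $y\in X\cap M$, with $y=L\cap L'$ for two lines $L,L'\in\cL$, and count the regions into which $L$ and $L'$ cut $M$.

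Consider first the case where neither line passes through $0$. Then each of $L$ and $L'$ meets $M$ in a chord, ray, or line not through the apex. Two lines crossing at the interior point $y$ produce four "quadrant" regions locally at $y$. We claim at least three of these four regions, restricted to $M$, are bounded.

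The key observation is that the cone $M$ is pointed, its apex angle is less than $\pi$, so its unbounded directions form an open angle less than $\pi$. The four sectors at $y$ cut out by $L$ and $L'$ have direction cones that are the four angles between the lines. An open angle less than $\pi$ (the recession cone of $M$) can meet the interiors of at most three of these four sectors. Hence at least one sector gives a bounded region of $M\setminus(L\cup L')$.

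This alone yields only one bounded cell, so a more careful count is needed. Let $\ell_1$ and $\ell_2$ be the two direction lines of $L$ and $L'$. The recession cone of $M$ is an open angle of width less than $\pi$. If it contains neither direction line, it lies inside a single sector, and the other three sectors are bounded, giving three bounded regions. Each bounded region of the arrangement restricted to $M$ contains at least one bounded cell of $\cL$, so $b(M)\ge 3$.

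The difficulty is when the recession cone of $M$ contains a direction of $L$ or $L'$, so that $L$ or $L'$ meets $M$ in a ray. Here we use that $L\cap L_1$ and $L\cap L_2$ are crossings lying in $\overline M$, that $L$ meets both $L_1$ and $L_2$ unless it is parallel to one of them, and convexity. If $L$ is parallel to neither, it meets both boundary rays, so $L\cap M$ is a bounded chord and its direction is not recessive. If $L$ is parallel to $L_1$, say, then its direction is the boundary direction of the recession cone, which is excluded from the open cone. So neither direction line lies in the open recession cone, and we are in the previous good case.

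There remains the case where one of the lines passes through $0$, say $L'\ni 0$ with $L'$ strictly between $L_1$ and $L_2$. Then $L$ does not pass through $0$, since $y\neq0$. Here $L$ is a chord (or parallel-boundary line) crossing the ray $L'\cap M$ at $y$. The cone $M$ is then cut into the bounded triangle between $L_1$, $L'$, $L$, the bounded triangle between $L'$, $L_2$, $L$, and unbounded pieces. That gives only two bounded regions. For a third we need another crossing, so we split on whether $L$ is parallel to a boundary line. If $L$ meets both $L_1$ and $L_2$, every crossing of $L$ with a further line lies in $\overline M$. The extra structure comes from the fact that $0$ is a vertex of $\conv X$, so that further lines cannot all pass through $0$ without creating crossings on $L$.

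The main obstacle we expect is exactly this configuration, in which all lines pass through $0$ except $L$. In that configuration the fan through $0$ contains at least the three lines $L_1,L',L_2$. If there are exactly these three rays and one chord, $b(M)=2$, which would contradict the lemma. We must therefore check the details: the bounded cells on the line $L$ itself also count as cells, since cells include edges and the vertex $y$. With that convention, the relatively open segment pieces and the vertex $y$ inside $M$ are bounded cells, and they give $b(M)\ge 3$ in every case. We will make this the cleanest route: the $2$-dimensional regions plus the vertex $y$ plus the edges inside $M$ already give at least $3$.
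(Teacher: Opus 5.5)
\textbf{There is a genuine gap.} Your main route, counting bounded $2$-dimensional regions of $M$ cut out by the two lines through $y$, fails. Your closing paragraph then asserts the needed conclusion without proving it.

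\emph{The three-regions claim fails.} Your ``good case'' says there are three bounded regions whenever $L,L'$ are not parallel to $L_1,L_2$, and you fold the parallel case into it. This is false when one line through $y$ is parallel to $L_1$ or $L_2$. Take $L_1\colon y=-x$, $L_2\colon y=x$, $L\colon x+y=2$, $L'\colon y=x-2$. Here $X=\{0,(1,-1),(1,1),(2,0)\}$, so $0$ is a vertex of $\conv X$, and $y=(2,0)\in M$. Only the square with vertices $0,(1,-1),(2,0),(1,1)$ is bounded. The region $\{x+y>2,\ y>x-2\}\cap M$ recedes in the direction $(1,1)$. That direction lies on the boundary of the recession cone of $M$, so your ``open angle meets at most three open sectors'' argument does not exclude it.

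\emph{The two-triangles claim fails.} In the case $L'\ni 0$ with $L\parallel L_1$ (e.g.\ $L'\colon y=0$, $L\colon x+y=2$), there is one bounded triangle, not two.

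These are exactly the configurations in which the paper applies the lemma: the case $k_1=1$, $s\ge 1$, and the case of one line parallel to each $L_i$. Separately, $X\subset\overline M$ is false in general, since two nested chords of $M$ meet outside $\overline M$. What is true is only that the crossings on $L_1\cup L_2$ lie on the two boundary rays of $M$.

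\emph{The missing step.} Your last paragraph has the right idea: cells include vertices and edges. But ``they give $b(M)\ge 3$ in every case'' is precisely what has to be shown, and the argument is short.
\begin{itemize}
\item Since $y\in M$ and $M$ is open, $y\notin L_1\cup L_2$, so the two lines through $y$ differ from $L_1$ and $L_2$.
\item Since $M$ is a pointed cone with angle less than $\pi$, neither line can stay in $M$ in both directions. So in some direction it leaves $M$ at a point of $L_1\cup L_2$, which is a crossing (possibly $0$).
\item The edge of that line starting at $y$ in that direction therefore ends at a crossing no farther than the exit point. It is bounded, and it lies in $M$, because an open segment from a point of a convex open set to a point of its closure lies in the set.
\end{itemize}
The vertex $y$ and these two edges, which lie on distinct lines, are three bounded cells in $M$. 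That is the paper's entire proof, and it makes your recession-cone case analysis unnecessary.
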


{\bf Proof.} Assume $x \in X\cap M$. Then each one of the two lines whose intersection is $x$ contains a bounded segment (with endpoint $x$). These two segments and $x$ form 3 bounded cells in $M$ and $b(M)\ge 3$ indeed. One can show further that $b(M)\ge 4$ but this is not needed.\qed

\smallskip
We are going to show that $X \cap M \ne \emptyset$ apart from three special cases that we will treat separately. This will finish the proof of Theorem~\ref{th:main} because then $b(M)\ge 3$ and the right hand side of (\ref{eq:dir}) is at least $2(k_1+k_2+1)$ while the left hand side is at most  $\disc_{BC}(L_1)+\disc_{BC}(L_2)\le 2(k_1+k_2)$.

\smallskip
If $k_1=1$ and $s\ge 1$ then there is a line $L_3\in \cL$ parallel with $L_1$, and another one, say $L_4$, (distinct from $L_1,L_2$), containing the origin. Their intersection is a crossing in $M$.

\smallskip
We are left with the case $k_1,k_2\ge 2$. For $i=1,2$ let $x_i$ be the crossing on $L_i$ closest to the origin. Write $H$ for the line containing $x_1$ and $x_2$. If $H \notin \cL$, then there is a line in $\cL$ containing $x_1$ and distinct fron $L_1$ and another line containing $x_2$ and distinct fron $L_2$. The intersection of these two lines is a crossing in $M$ again.

\smallskip
So $H \in\cL$ and assume $s\ge 1$. Then a line in $\cL$ containing the origin (and distinct from $L_1,L_2$) intersects $H$ in a crossing in $M$. So $s=0$ and  $M$ contains the bounded cell $(x_1,x_2)$ and the triangle $\conv\{0,x_1,x_2\}$ showing that $b(M)\ge 2$. When $n=3$ we simply check that $L_1^+=4$ is indeed larger than $|B|/2=7/2$. This is one of the special cases.  

\smallskip
So $n\ge 4$ and $s=0$ and there is a line $L\in \cL$ different from $L_1,L_2,H$. If $L\cap M$ is a segment, then it is a bounded cell in $M$ and $b(M)\ge 3$. This is the second special case.

\smallskip
So  $L\cap M$ is not a segment and then $L$ is parallel with $L_1$ or with $L_2$. If there is one line parallel with $L_1$ and another one parallel with $L_2$, then their intersection is a crossing in $M$ and we are done. So each line $L\in \cL$ different from $L_1,L_2,H$ is parallel with $L_2$, say. Then $k_1=2$ and a simple checking gives that $b(L_1^+)=2k_2$ which is larger than half of $|B|=2k_2+3$. This was the last special case.\qed

\section{Proof of Theorem~\ref{th:Xdisc}}

 Set $v(M)=m$ and $k_1+k_2=k$ and we have, the same way as in the proof of Theorem~\ref{th:main}, that
\[
v(L_2^+)+(k_2-1) +m=v(L_1^+) \mbox{ and } v(L_1^-)+(k_1-1)+m=v(L_2^-) 
\]
implying that 
\begin{equation}\label{eq:Xdisc}
\disc_X(L_1)+\disc_X(L_2)\ge k+2(m-1).
\end{equation}

As $k\ge 3$, this implies that $\disc_X(L_1)$ or $\disc_X(L_2)$ is at least one.

\smallskip
We are going to use the crossing lemma, originally due to Ajtai et al. \cite{ACNS}. It is about a graph $G$ with $v$ vertices and $e$ edges drawn in the plane. The {\sl crossing number} $\CR(G)$ of $G$ is the number of points where two edges intersect in a non-vertex of $G$.  We use it in the form due to Pach and T\'oth \cite{PachT}, the one in Ackerman \cite{Ack} would give the same order of magnitude for $\disc_X(n)$ with a slightly weaker constant.

\begin{lem} If under the previous conditions $e>4v$ then 
\[
\CR(G) > \frac 1{64} \frac {e^3}{v^2}.
\]
\end{lem}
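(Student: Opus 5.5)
The plan is to give the standard probabilistic ("amplification") proof of the crossing lemma. It starts from a weak linear bound derived from Euler's formula. We assume, as usual, that $G$ is a simple graph, that edges are drawn as curves avoiding non-incident vertices, and that $\CR(G)$ counts pairs of edges crossing at a non-vertex point. If several edges pass through one point, we count every crossing pair there. This only makes $\CR(G)$ larger than the number of crossing points, so a lower bound in this sense is what the application needs when the drawing has no multiple crossings. In the application the edges are segments of the lines of $\cL$.

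\emph{Step 1 (linear bound).} We show that $\CR(H) > e(H)-3v(H)$ for every drawn graph $H$, strictly as soon as $v(H)\ge 1$. We delete one edge per crossing pair until the drawing is crossing-free; at most $\CR(H)$ edges are deleted. The remaining plane graph has at most $3v-6$ edges if $v\ge 3$, at most $1$ edge if $v=2$, and no edge if $v=1$; in each case this is at most $3v-3$. Hence $e-\CR(H)\le 3v-3$, that is, $\CR(H)\ge e-3v+3>e-3v$. If $v=0$ both sides are $0$, so $\CR(H)\ge e-3v$ holds with equality.

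\emph{Step 2 (random sampling).} Keep each vertex of $G$ independently with probability $p\in(0,1]$, and let $G_p$ be the induced subdrawing. Each edge survives with probability $p^2$. Each counted crossing survives with probability $p^4$, because its two edges have four distinct endpoints: edges sharing an endpoint may be assumed not to cross, after the usual redrawing, or we restrict to drawings by straight segments, as in our application. Taking expectations in Step 1 gives
\[
p^4\,\CR(G)=\mathbb{E}\,\CR(G_p) > \mathbb{E}\,e(G_p)-3\,\mathbb{E}\,v(G_p)=p^2e-3pv .
\]
The inequality is strict because $\Pr[v(G_p)\ge 1]>0$, and on that event Step 1 is strict.

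\emph{Step 3 (choice of $p$).} Set $p=4v/e$, which is less than $1$ exactly because $e>4v$. Dividing by $p^4$ gives
\[
\CR(G) > \frac{e}{p^2}-\frac{3v}{p^3}=\frac{e^3}{16v^2}-\frac{3e^3}{64v^2}=\frac{1}{64}\frac{e^3}{v^2}.
\]

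The only delicate point is the justification of the $p^4$ survival probability, together with the bookkeeping of strictness. Adjacent edges that cross, or several edges through one crossing point, must be handled by the counting convention fixed above. In the application to line arrangements the edges are straight segments, so adjacent edges never cross and the issue disappears.
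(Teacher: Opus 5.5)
Your proof is correct, and it takes a different route from the paper only in that the paper gives no proof: it imports the crossing lemma from Pach and T\'oth, while you write out the standard probabilistic amplification of the Euler bound $\CR\ge e-3v$. That argument is exactly the one that yields the threshold $e>4v$ and the constant $1/64$. Your extra $+3$ in Step~1 correctly turns the usual $\ge$ into the strict inequality as stated.

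What writing out the proof buys over the citation is that the hypotheses become visible, and they are not cosmetic. Read literally, with no simplicity assumption and with $\CR(G)$ counting intersection \emph{points}, the statement is false, e.g.\ for many non-crossing parallel edges between two vertices. This matters for the paper's use of the lemma. $G$ is a multigraph as soon as two lines of $\cL$ other than $L_1,L_2$ pass through the origin, since every red edge joins $0$ to $x^{\infty}$. And when several lines are concurrent at a point of $M$, the number $m$ of crossing points is smaller than your pair count. So your caveat about multiple crossings points at a real limitation in how the lemma is applied.

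One correction: your claim that in the application all edges are straight segments is wrong. In the paper's drawing the red and green edges are bent to reach $x^1,x^2,x^{\infty}$. Rest the $p^4$ step on your first justification instead: pass to a drawing of $G$ that minimizes the number of crossing pairs. In such a drawing adjacent edges do not cross, and its count is at most that of the given drawing.
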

 \begin{figure}[h!]
\centering
\includegraphics[scale=0.65]{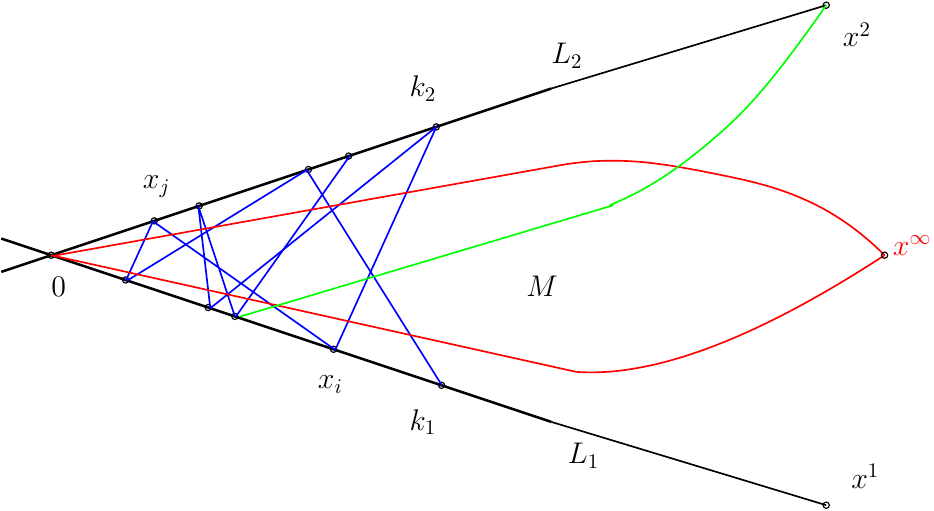}
\caption{The graph $G$.}
\label{fig:Graph}
\end{figure}
In Figure~\ref{fig:Graph} there is a graph $G$ drawn in the plane. We explain next what this graph is and how it is drawn. The vertices of $G$ are the points in $X\cap (L_1\cup L_2)$ plus two more vertices $x^1$ and $x^2$ and possibly one more, namely $x^{\infty}$. Here $x_i$ is at infinity on $L_i $, $i=1,2$ and $x^{\infty}$ is at infinity on the positive half of the $x$ axis but only if there is a line in $\cL$ containing the origin, different from $L_1,L_2$. So $G$ has $v=k+t$ vertices where $t\in \{1,2\}$. 

\smallskip
If the line through $x_i \in X\cap L_1$ and $x_j \in X \cap L_2$ is in $\cL$, then the segment $[x_i,x_j]$ is an edge of $G$. This edges are coloured blue on Figure~\ref{fig:Graph}. The last crossing on $L_i$ and $x^i$ form an edge in $G$ for $i=1,2$. A line in $\cL$ parallel with $L_2$ and containing $x \in L_1$ defines an edge connecting $x$ and $x^2$ as shown in green colour on the figure. Similarly a line in $\cL$ parallel with $L_1$ and containing $x \in L_2$ defines an edge connecting $x$ and $x^1$. If there is a line $L\in \cL$ containing the origin and distinct from $L_1,L_2$, then there is an edge in $G$ connecting the origin and $x^{\infty}$ along $L$. This is shown, twice, in the figure in colour red. In the drawing of $G$ the red and the green edges follow their line for long and then are bended in order to reach $x^i$ or $x^{\infty}$. Two consecutive crossings on $L_1$ and on $L_2$ also form an edge of $G$. This is $k-2$ edges. Thus $G$ has $e=k+n$ edges. 

\smallskip 
Suppose that $e=k+n> 4v\ge 4(k+t)$. The crossing lemma applies and shows that
\[
m=\CR(G)>\frac 1{64} \frac {(k+n)^3}{(k+t)^2}>\frac 1{64}\left(1+\frac{n-t}{k+t}\right)^2(k+n)\ge \frac 14(k+n)
\]
since $k+n>4(k+t)$ implies that $\left(1+\frac{n-t}{k+t}\right)>4$. We conclude that $m>(k+n)/4$ and it follows from (\ref{eq:Xdisc}) that  $\disc_X(L_i)\ge \frac 14 n$ for $i=1$ or 2. 

\smallskip
Assume next that  $e=k+n\le 4v= 4(k+t)$. Then $k\ge \frac 13(n-4t)\ge \frac 13(n-8)$. Using (\ref{eq:Xdisc}) again we see that $\disc_X(L_i)\ge \frac 16 (n-14)$ for $i=1$ or 2.\qed

\section{Proof of Theorem~\ref{th:gen}}

Order the lines in $\cL$ linearly by the relation $\prec$ so $L_1\prec L_2$ means that $L_1$ comes before $L_2$ in this order (and $L_1\ne L_2)$.
Assume $H,K_1,K_2 \in \cL$, $H\ne K_1,K_2$ and $K_1\prec K_2$. Let $x=K_1\cap K_2$ which is unique because $\cL$ is in general position and $x\notin H$. Define $\si(H,K_1,K_2)=1$ if $x\in H^+$ and $-1$ if $x\in H^-$. With this notation $\disc_X H=\left|\sum_{K_1,K_2}\si(H,K_1,K_2)\right|$ with summation taken over all pairs $K_1,K_2$ distinct from $H$ and $K_1\prec K_2$. 

The key step in the proof is 
\begin{lem}\label{l:6typ} $\sum_{H \in \cL}(\disc_X H)^2 \ge 2{n \choose 5}+O(n^4)$.
\end{lem}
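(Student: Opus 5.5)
The plan is to expand the square and group the terms by the set of lines involved. For fixed $H$,
\[
(\disc_X H)^2=\sum_{\{K_1,K_2\}}\sum_{\{K_3,K_4\}}\si(H,K_1,K_2)\,\si(H,K_3,K_4),
\]
where both sums run over unordered pairs of lines distinct from $H$; the order $\prec$ only fixes the labelling, and $\si$ depends only on the crossing $K_1\cap K_2$ and on $H$. I will split the double sum by $|\{K_1,K_2\}\cap\{K_3,K_4\}|$.

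The terms with equal pairs contribute $\binom{n-1}{2}$ for each $H$, hence $O(n^3)$ in total. The terms where the two pairs share exactly one line involve only four lines. There are $O(n^4)$ of them, each of absolute value $1$, so they contribute $O(n^4)$ in absolute value, whatever their sign. Only the terms with disjoint pairs remain. Such a term is determined by a $5$-element subset $S\subset\cL$, a choice of $H\in S$, and an ordered pair of disjoint pairs partitioning $S\setminus\{H\}$. Since there are $3$ perfect matchings of a $4$-set and each is counted twice as an ordered pair, we get
\[
\sum_{H\in\cL}(\disc_X H)^2 = \sum_{|S|=5} 2f(S) + O(n^4), \qquad f(S)=\sum_{H\in S}\ \sum_{\{P,Q\}}\si(H,P)\,\si(H,Q).
\]
Here the inner sum runs over the three matchings $\{P,Q\}$ of $S\setminus\{H\}$ into two pairs, and $\si(H,P)$ is the side of $H$ containing the crossing of the pair $P$. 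So the lemma reduces to a purely finite statement: $f(S)\ge 1$ for every general position arrangement $S$ of $5$ lines.

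To prove $f(S)\ge1$, I first note that $f(S)$ is an integer. It is a sum of $15$ terms of the form $\pm1$, so it is odd, and it suffices to show $f(S)>-1$, i.e.\ $f(S)\ge 1$.

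It helps to rewrite the contribution of a fixed $H$. Let $s_1,\dots,s_6$ be the signs of the six crossings of the four other lines relative to $H$. The sum of products over the three matchings equals
\[
\tfrac12\Big[\big(\textstyle\sum s_i\big)^2-6\Big]-\sum_{\text{adjacent pairs}} s_is_j,
\]
where adjacent means the two crossings share a line. This identity is useful for organizing the case analysis.

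The sign pattern is a combinatorial invariant: it depends only on the combinatorial type of the arrangement, with its up/down orientation, and reflecting in a horizontal line flips every $\si$ and leaves $f(S)$ unchanged. I would therefore enumerate the finitely many combinatorial types of simple arrangements of $5$ lines, equivalently the allowable sequences or wiring diagrams of $5$ pseudolines, modulo these symmetries. The label of Lemma~\ref{l:6typ} suggests that $6$ types arise. For each type I would compute $f(S)$ from the wiring diagram: the side of $H$ containing the crossing of $K_i,K_j$ is read off by comparing the position of $H$ with that crossing in the left-to-right order. The goal is to check that $f(S)\ge1$ in every case.

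The main obstacle is this finite verification. I need to be sure the list of types is complete and to compute $f$ correctly for each. If the enumeration becomes unwieldy, I would first try a structural argument instead. Take the two extreme (leftmost and rightmost) crossings of $S$, or a vertex of the convex hull as in the proof of Theorem~\ref{th:main}. The idea is to show that for lines $H$ not through them, the products $\si\si$ over disjoint pairs are forced to be positive often enough to dominate the negative contributions.

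Once $f(S)\ge1$ holds for all $S$, summing over the $\binom n5$ subsets gives $\sum_H(\disc_X H)^2\ge 2\binom n5+O(n^4)$.
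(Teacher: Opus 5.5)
\textbf{The missing step is the whole lemma.} Your reduction is exactly the paper's. You expand the squares, discard the $O(n^4)$ terms whose indices are not all distinct, group the rest by $5$-subsets with the factor $2$, and reduce to $f(S)\ge 1$ for every general position five-line arrangement $S$. Your $f(S)$ is the paper's $\si(H_1,\dots,H_5)$. But that inequality is the entire content of the lemma, and you do not prove it. You state it as a goal, infer the number of types from the label, and compute $f$ for no type at all. The fallback ``structural argument'' is only a hope, not an argument. The parity remark does not help either: it turns $f(S)\ge 0$ into $f(S)\ge 1$, but you give no lower bound to start from. The paper closes this step by citing the classification of simple five-line arrangements in the Euclidean plane into exactly six combinatorial types (Ringel; Cutler et al.). It then evaluates the sum on each type and finds it positive every time. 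Without such a verification, your argument establishes only the bookkeeping.

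\textbf{How to close it.} Your own identity makes the check quick once you sum it over $H\in S$. For each $K\in S$, order the four crossings on $K$ along $K$.
\begin{itemize}
\item For the two lines $H$ through the end crossings, the three crossings of $K$ with $S\setminus\{H,K\}$ lie on one side of $H$, so their adjacent sum is $3$.
\item For the two lines $H$ through the middle crossings, those three crossings split $1:2$, so their adjacent sum is $-1$.
\end{itemize}
Hence the adjacent terms total $5(3+3-1-1)=20$ over $S$, and
\[
f(S)=\tfrac12\sum_{H\in S}\bigl(\disc_X^{S}H\bigr)^2-35 .
\]
Here $\disc^S_X H\in\{0,2,4,6\}$ is the crossing discrepancy of $H$ inside the five-line arrangement $S$. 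So what you must show is $\sum_{H\in S}(\disc_X^S H)^2\ge 72$, and this can be read off directly from a wiring diagram.

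You still need a provably complete list of arrangements to check. One option is to cite the six-type classification, as the paper does. The other avoids the citation: check all $62$ commutation classes of reduced words for the longest permutation in $S_5$. After a generic rotation, every simple five-line arrangement has such a wiring diagram, and $f$ depends only on the commutation class.

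If you carry this out, the six types give $f=1,3,3,3,5,5$, not the paper's list $5,3,5,3,5,1$. All values are still $\ge 1$, which is all the lemma needs.
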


{\bf Proof.} We expand the sum of the squares as
\begin{eqnarray*}
\sum_{H \in \cL}(\disc_X H)^2&=&\sum_{H \in \cL}\sum_{K_1,K_2,L_1,L_2}\si(H,K_1,K_2)\si(H,L_1,L_2)\\
   &=&\sum_{H \in \cL}\sum_{K_1,K_2,L_1,L_2}\si(H,K_1,K_2)\si(H,L_1,L_2)+O(n^4). 
\end{eqnarray*}
where in the second sum $H,K_1,K_2$ are distinct and $K_1\prec K_2$ and $H,L_1,L_2$ are distinct and $L_1\prec L_2$ while in the third sum $H,K_1,K_2,L_1,L_2$ are all distinct and $K_1\prec K_2$, $L_1\prec L_2$ and the term $O(n^4)$ accounts for the terms where $H,K_1,K_2,L_1,L_2$ are not all distinct. 

For every 5-tuple $H_1,H_2,\ldots,H_5$ (of distinct lines in $\cL$) we collect the terms in the last sum containing exactly these five lines. Such a term is of the form $\si(H_i,H_x,H_y)\si(H_i,H_u,H_v)$ where $\{i,x,y,u,v\}=\{1,2,3,4,5\}$ and $H_x\prec H_y$ and $H_u\prec H_v$. As both $\si(H,K_1,K_2)\si(H,L_1,L_2)$ and $\si(H,L_1,L_2)\si(H,K_1,K_2)$ appear for a given 5-tuple, we assume $H_x\prec H_u$ and will have to multiply the outcome by 2. We sum these terms for a fixed $i$. There are three such terms, each equal to $\pm 1$. So this sum is $\pm 3 $ or $\pm 1$. Then we add these sums for the five possible choices of $H_i$ and denote the outcome by $\si(H_1,\ldots,H_5)$.
\begin{figure}[h!]
\centering
\includegraphics[scale=0.8]{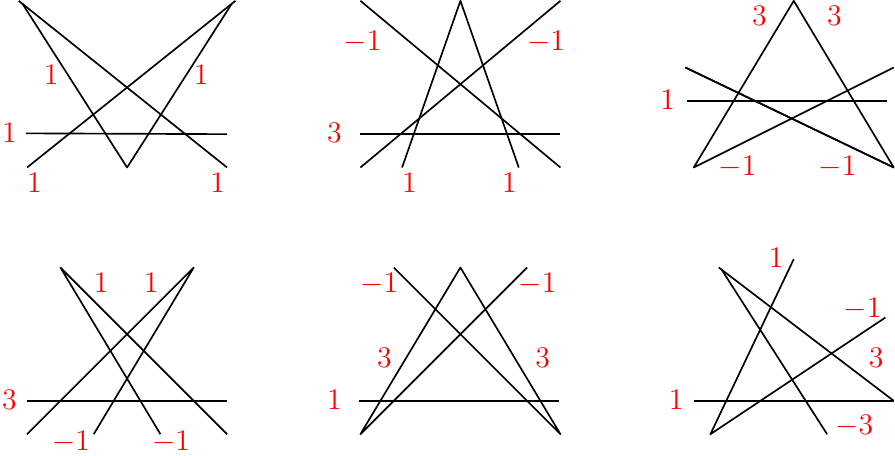}
\caption{The six combinatorially different types of 5-lines.}
\label{fig:6typ}
\end{figure}

There are exactly six combinatorially distinct types of 5-tuples of lines in $\R^2$ in general position; see Ringel \cite{Rin}, page 90, and Cutler et al. \cite{CKS}, Figure 9, and also \cite{Christ}. The 6 types are shown on Figure~\ref{fig:6typ}. Each five tuple $H_1,\ldots,H_5$ from our $\cL$ is one of them. The computation of $\si(H_1,\ldots,H_5)$ for one combinatorial type starts with a three term sum for each choice $H_i$. This sum is written (in red) next to the actual line in Figure~\ref{fig:6typ}. All of a sudden a miracle happens: it turns out that $\si(H_1,\ldots,H_5)\ge 1$ for every combinatorial type. One can see that $\si(H_1,\ldots,H_5)$ equals 5,3,5 in the top row, left to right, of Figure~\ref{fig:Graph} and 3,5,1 in the bottom row.\qed

\medskip
It is easy to complete the proof from here. There are $n$ terms in the sum $\sum_{H \in \cL}(\disc_X H)^2$ so one of them is at least $\frac 2n \left({n \choose 5}+O(n^4)\right).$ Then for some $H \in \cL$
\[
\disc_X H \ge \sqrt{\frac 2n \left({n \choose 5}+O(n^4)\right)}\ge \frac {n^2}{\sqrt {60}}\sqrt{1+O(n^{-1})}.\qed
\]

\section{Proof of Theorem~\ref{th:BCdisc}}

We note that $\disc_{BC}(n)\ge 1$ follows from $\disc_X (n)\ge 1$, the lower bound in Theorem~\ref{th:Xdisc}. To fix notation let $\ell_i\ge 0$ be the number of lines in $\cL$ parallel with $L_i$ (excluding $L_i$), $i=1,2$, and $s\ge 0$ be the number of lines in $\cL$ containing the origin excluding $L_1,L_2$. We can't have $k_1=k_2=1$. 

\smallskip
Assume first that $k_1=1, k_2\ge 2$. If $s=0$, then $\cL$ is case (c) and $\disc_{BC}(L_1)=2(n-2)$. If $s\ge 1$, then $k_2=n-s$. Each line passing through the origin (including $L_2$ but not $L_1$) contains $2(k_2-1)$ bounded cells. There are further $s(k_2-1)$ bounded 2-dimensional cells in $L_1^+$. Thus $\disc_{BC}(L_1)=b(L_1^+)-b(L_1^-) \ge 2(s+1)(k_2-1)+s(k_2-1)=(3s+2)(n-1-s)$. It is easy to see that this expression is larger than $2(n-2)$ for all $s=1,2,\ldots,n-2$. 

\smallskip
We use inequality (\ref{eq:dir}) in order to give a lower bound on $k+b(M)-2$ where $k=k_1+k_2$. We will work again with the graph $G$ from the previous section. $G$ has $e=k+n$ edges and $v=k+t$ vertices where $t \in \{1,2\}$. If $e\le 4v$ then $k\ge \frac 13(n-8)$ and $k+b(M)-2\ge k-2\ge  \frac 13(n-14)$ implying that $\min_{i=1,2} \disc_{BC}L_i\ge \frac 13(n-14)$. 

\smallskip
We apply the crossing lemma when $e> 4v$ which gives, as we have seen, that $M$ contains at least $m=\frac 14(k+n)$ vertices.  Each such vertex $x$ is the intersection of two lines from $\cL$. Each one of these lines contains a bounded 1-dimensional cell whose endpoint is $x$. This gives $3m$ bounded cells in $M$. Thus $\min_{i=1,2} \disc_{BC}L_i\ge \frac 34(n+k)\ge \frac 34(n+1)$ in this case. \qed

\smallskip
We remark that, quite possibly, $\disc_{BC} (n)=2(n-2)$.

\medskip
{\bf Acknowledgements.} I'm indebted to Volkmar Welker for the question that initiated this piece of research. I also thank H\'el\`ene Barcelo, Patty Commins, and John Shareshian for the many useful and pleasant discussions we had about discrepancies and related issues. This piece of research is partially based upon work supported by the National Science Foundation under Grant No. DMS-2424139, while the author was in residence at the Simons Laufer Mathematical Sciences Institute in Berkeley, California, during the Spring 2026 semester.

\vskip0.3cm

\noindent
Imre B\'ar\'any \\
Alfr\'ed R\'enyi Institute of Mathematics, HUN-REN\\
13 Re\'altanoda Street, Budapest 1053 Hungary\\
{\tt barany.imre@renyi.hu}, and\\
Department of Mathematics, University College London\\
Gower Street, London, WC1E 6BT, UK\\

\end{document}